\def \bbb{\color{blue}}
\def \kkk{\color{black}}
\def \R{\mathbb{R}}
\def \E{\mathcal{E}}
\def \I{\mathcal{I}}
\def \S{\mathcal{S}}
\def \D{\Delta}
\def \I{\mathcal{I}}
\def \g{\gamma}
\def \a{\alpha}
\def \s{\sigma}
\def \St{\S_{\tau}}
\newtheorem{theorem}{Theorem}
\newtheorem{remark}{Remark}
\newtheorem{lemma}{Lemma}
\begin{document}
%
\title{Stability of Switched Linear Systems under Dwell
Time Switching with Piece Wise Quadratic Functions}

\author{\IEEEauthorblockN{Masood Dehghan}
\IEEEauthorblockA{Advanced Robotics Centre\\
National University of Singapore\\
1 Engineering Drive 3, Singapore 117580, Singapore\\
Email: masood@nus.edu.sg}
\and
\IEEEauthorblockN{Marcelo H. Ang}
\IEEEauthorblockA{Department of Mechanical Engineering\\
National University of Singapore\\
9 Engineering Dr 1, Singapore 117576, Singapore\\
Email: mpeangh@nus.edu.sg}
}


%


\maketitle

\thispagestyle{fancy}
\fancyhead{}
\lhead{}
\lfoot{}
\cfoot{}
\rfoot{}
\renewcommand{\headrulewidth}{0pt}
\renewcommand{\footrulewidth}{0pt}

\begin{abstract}
This paper provides sufficient conditions for stability of switched linear systems under dwell-time switching.  Piece-wise quadratic  functions are utilized to characterize the Lyapunov functions and
bilinear matrix inequalities conditions are derived for stability of switched systems. By increasing the number of  quadratic functions, a sequence of upper bounds of the minimum dwell time is obtained.
Numerical examples suggest that if the number of  quadratic functions is sufficiently large, the sequence may converge to the  minimum dwell-time. \kkk
\end{abstract}


%
\IEEEpeerreviewmaketitle

\section{Introduction}
\label{sec:into}

This paper  investigates to the stability  of  switched linear systems:
\begin{align}
        & \dot x(t) = A_{\sigma(t)} \, x(t), \label{eqn:1a}
\end{align}
where $x(t) \in \R^n$ is the state variable and $\sigma(t):\R^+
\rightarrow \I_N:=\{1,\cdots, N\}$ is a time-dependent switching
signal that indicates the current active mode of the system among $N$
possible modes in $\mathcal{A}:=\{A_1, \cdots, A_N\}$.
All matrices $A_i, i = 1, \cdots, N$ are assumed to be Hurwitz.

This class of systems has been widely investigated in the last decade, due to their importance both in the theoretical context and in engineering applications, see e.g. the  recent surveys  \cite{Branicky1998,book_Liberzon,Shorten2007,HaiLin2009}.

The stability problem is  one of the most important issue associated with the study of such systems. This problem has been addressed mainly using the theory of Lyapunov functions (LFs), to find  conditions under which  the system preserves stability.
For example, the origin of system (\ref{eqn:1a}) is
stable under arbitrary switching upon the existence of a common
quadratic Lyapunov function
\cite{book_Liberzon},  switched
Lyapunov functions \cite{switched_Lyapunov2002}, multiple
Lyapunov functions \cite{Branicky1998,dehghan2014computation},
composite quadratic functions \cite{Hu2008}
 or polyhedral Lyapunov functions
\cite{Blanchini20072,Dehghan-sat}.
It is known  that existence of a piece-wise linear (polyhedral) LF \cite{Blanchini20072}
or a piece-wise quadratic  LF \cite{Hua2010}  is both necessary and sufficient  for asymptotic stability of
system (\ref{eqn:1a}) under arbitrary switching.
This implies that the class of polyhedral functions or piece-wise quadratic functions are universal for characterization of stability of switched linear systems under arbitrary switching.

Another condition for stability
is that based on dwell-time consideration. When all $A_i$'s are stable,
stability of the origin can be ensured if the time duration spent
in each subsystem is sufficiently long \cite{book_Liberzon}.
Upper bounds of the minimal dwell-time needed have also appeared
\cite{Geromel2006,Blanchini-2010,our11,Chesi2012,our22}.
Finding the minimum dwell time is known to be a hard problem and the research has been shifted to find testable conditions for the computation of upper bounds to the minimum dwell time.
In \cite{Wirth-2005} it is shown that stability under dwell time implies the existence of multiple Lyapunov norms, but the result is not constructive.
In \cite{Blanchini-2010} a necessary and sufficient condition for  stability of (\ref{eqn:1a})  in terms of piecewise linear (polyhedral) LF is provided, however construction of such polyhedral LF are not easy.
Alternatively,  \cite{our11,our33} introduce the concept of dwell-time(DT)-contractive sets and show that existence of a polyhedral DT-contractive set is both necessary and sufficient for stability of (\ref{eqn:1a}). An algorithm for computation of DT-contractive sets is also proposed for discrete-time systems, however computation of such sets for continuous systems is still lacking.
In \cite{Chesi2012}, polynomial  functions are used to characterize the LF and the problem is  formulated  as a set of LMIs to compute upper bounds of minimum dwell time.

In this paper, we use piece-wise quadratic functions to characterize the Lyapunov functions and provide stability conditions in terms of bilinear matrix inequalities (BMIs).
In the limiting case where the dwell-time approaches zero, system is under arbitrary switching, and the proposed conditions retrieve the results
 appeared in the
literature  \cite{Hua2010}.
Hence, this work can also be seen as a generalization of those obtained for
arbitrary switching systems.
It turns out by increasing the number  of the  quadratic functions that characterize the Lyapunov function, the proposed  conditions has more degree of freedom and can be used to determine the minimal dwell-time needed for
stability of (\ref{eqn:1a}).

The rest of this paper is organized as follows. This section ends
with a description of the notations used. Section \ref{sec:prelim}
reviews some standard terminology and results for switching
systems.  Section \ref{sec:main} shows the main results on the
characterization of the Lyapunov functions with piece-wise quadratic functions  for system
(\ref{eqn:1a}). An algorithmic procedure  for  computation of sequence of upper bounds of the minimum dwell time needed for stability is also presented in this section.
Sections \ref{sec:example} and  \ref{sec:conclusions}
contain, respectively, numerical examples and conclusions.

The following standard notations are used. $\R^+$ is the set of
non-negative real numbers.
Positive definite (semi-definite) matrix, $P \in \R^{n
\times n}$, is indicated by $P \succ 0 (\succeq 0)$ and $I_{n}$ is the $n \times n$ identity matrix.
Given a $P \succ 0$,
 $\E(P):=\{ x: x^TPx \le 1 \}$.
Other notations are introduced when needed.

\section{Preliminaries}
\label{sec:prelim}

This section reviews definitions of dwell-time, admissible switching sequences, piece-wise quadratic functions and preliminary results on stability of (\ref{eqn:1a}) under dwell-time switching.  These definitions
have appeared in  prior papers (see, e.g. \cite{our11,Hu-2003,Hua2010,Chesi2012}) but are repeated here for completeness
and for setting up the needed notations and results.

Denoting by $t_k$, $k = 0,1,2, \cdots $ the switching instants, we assume that the following dwell-time restriction is imposed on the switching sequence
$\s$, i.e. $\s \in \S_\tau$,
\begin{align}
\S_\tau = \{ \s(t) :  t_{k+1} - t_k \ge \tau \}
\end{align}
where $\S_\tau$  is the  set of admissible switching signals that satisfies the dwell-time  $\tau \ge 0$ restriction.
Note that $\s(t)$ is a piecewise constant function, in the sense that $\s(t) =\s(t_k) $ for $ t \in  [t_k,t_{k+1})$.
The minimum dwell-time, $\tau_{min}$, is defined as the minimum $\tau$ ensuring asymptotic stability of system (\ref{eqn:1a}) for all possible $\s \in \St$.
More specifically, it is defined as
\begin{align*}
\tau_{min} := \inf \{ \tau \ge 0 : (\ref{eqn:1a}) \textrm{ is Asymptotically stable,} \forall \s(t) \in \St \}
\end{align*}

\subsection{Piece-wise Quadratic Functions}
For a positive semidefinite function $V: \R^n \rightarrow [0, \infty)$, denote its $1$-level set as
\begin{align*}
L_V := \{ x \in \R^n: V(x) \le 1 \}
\end{align*}

The one sided directional derivative of $V(x)$ is defined with respect to two variables $x$ and $\zeta$, where $\zeta$ specifies the direction of increment or motion
\begin{align*}
\dot V (x ; \zeta) := \lim_{h \downarrow 0} \frac{ V(x+\zeta h) - V(x) }{h}
\end{align*}
where  $h \downarrow 0$ denotes decreasing to $0$.

Given $m$ positive definite matrices $P_r \succ 0$, $r = 1, \cdots, m$, a piece-wise quadratic function can be obtained by \cite{Hu-2003}:
\begin{align}
\label{eqn:max}
 V_{max} (x) :=  \max \{  x^T P_r \, x : r = 1 , \cdots, m \}
\end{align}
as the pointwise maximum of $m$ functions $x^T P_r x$, $r=1, \cdots, m$ and its 1-level set is the intersection of the ellipsoids $\E(P_r)$, i.e. $L_{V_{max}} = \cap_{r=1}^{m} \E(P_r)$.

The directional derivative of $V_{max}$ along the $i$-th mode of system (\ref{eqn:1a}) is \cite{Hu-2003}:
\begin{align}
\label{eqn:dot-max}
\dot V_{max} (x\, ; A_i x) := \max \big\{ & x^T ( A_i^T P_s + P_s A_i )x :  \nonumber \\
& s \in \{ s: V_{max}(x)=x^T P_s \, x \} \big\}
\end{align}

\subsection{Stability Results}
We start from the following theorem which states the necessary and sufficient condition for stability of (\ref{eqn:1a}) with dwell-time $\tau$.

\begin{theorem}
\label{thm:1}
\cite{Wirth-2005} System (\ref{eqn:1a}) is asymptotically stable in $\St$,  if and only if there exist continuous functions $V_i(x)$, associated to each mode $i$, such that
\begin{subequations}
\label{eqn:NS}
\begin{align}
& V_i(x) > 0  & \forall x \ne 0, \forall i \\
& \dot V_i (x\, ; A_i x) < 0  & \forall x \ne 0, \forall i  \\
& V_j(e^{A_i \tau}x) < V_i(x) & \forall x \ne 0, \forall i \neq j
\end{align}
\end{subequations}
\end{theorem}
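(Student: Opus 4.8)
The plan is to prove the two implications separately. Throughout, for an admissible signal $\s\in\St$ with switching instants $t_0=0<t_1<t_2<\cdots$ I write $x(\cdot)$ for the corresponding solution of (\ref{eqn:1a}) and use that on each interval $[t_k,t_{k+1})$ the active mode $i=\s(t_k)$ propagates the state by the flow $x(t)=e^{A_i(t-t_k)}x(t_k)$.

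For \emph{sufficiency} I would assume functions $V_i$ satisfying (\ref{eqn:NS}) exist and track the entry values $v_k:=V_{\s(t_k)}(x(t_k))$. Fix a switch from $i=\s(t_k)$ to $j=\s(t_{k+1})$ and set $\Delta_k:=t_{k+1}-t_k\ge\tau$. Using the semigroup identity $e^{A_i\Delta_k}=e^{A_i\tau}e^{A_i(\Delta_k-\tau)}$, put $z:=e^{A_i(\Delta_k-\tau)}x(t_k)$, so that $x(t_{k+1})=e^{A_i\tau}z$. Then (\ref{eqn:NS}c) gives $V_j(e^{A_i\tau}z)<V_i(z)$, while (\ref{eqn:NS}b) — which makes $t\mapsto V_i(e^{A_it}x(t_k))$ strictly decreasing — gives $V_i(z)\le V_i(x(t_k))$ since $\Delta_k-\tau\ge0$. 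Chaining these yields $v_{k+1}<v_k$, so the entry values form a strictly decreasing nonnegative sequence, and (\ref{eqn:NS}b) also forces $V_{\s(t_k)}(x(\cdot))$ to decrease inside each interval; hence $V_{\s(t)}(x(t))$ is nonincreasing, which already delivers Lyapunov stability. For attractivity I would upgrade the pointwise inequality (\ref{eqn:NS}c) to a uniform contraction: since the system is linear the conditions are scale-covariant and the $V_i$ may be taken positively homogeneous, each level set $\{y:V_i(y)=1\}$ is then compact, the continuous map $y\mapsto V_j(e^{A_i\tau}y)$ attains a maximum $\rho_{ij}<1$ there by (\ref{eqn:NS}c), and homogeneity extends this to $V_j(e^{A_i\tau}y)\le\rho\,V_i(y)$ with $\rho:=\max_{i\ne j}\rho_{ij}<1$. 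Combined with the in-interval decrease this gives $v_{k+1}\le\rho\,v_k$, hence $v_k\le\rho^{k}v_0\to0$ whenever infinitely many switches occur (and $t_k\to\infty$, as consecutive switches are at least $\tau$ apart); if only finitely many switches occur the trajectory eventually follows a single Hurwitz mode and decays by (\ref{eqn:NS}b). Either way $x(t)\to0$.

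For \emph{necessity} I would assume (\ref{eqn:1a}) is asymptotically stable on $\St$ and first promote this to \emph{uniform exponential stability}: because $\A$ is finite with all $A_i$ Hurwitz and the dwell-time bound $\tau$ precludes arbitrarily fast switching, there exist $C\ge1$ and $\lambda>0$ with $\|x(t)\|\le Ce^{-\lambda t}\|x(0)\|$ for every $\s\in\St$, every $x(0)$ and all $t\ge0$. Fixing $0<\mu<\lambda$, I would set, for each mode $i$,
\[
V_i(x):=\sup\{\,e^{\mu t}\|x(t)\|:\ t\ge0,\ \s\in\St,\ \s(0)=i,\ t_1\ge\tau,\ x(0)=x\,\},
\]
the worst weighted overshoot over all admissible futures that begin by dwelling in mode $i$ for at least $\tau$.

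The verification would proceed as follows. Finiteness and a uniform Lipschitz bound follow from $e^{\mu t}\|x(t)\|\le Ce^{(\mu-\lambda)t}\|x\|\le C\|x\|$, so $V_i$, as a supremum of uniformly Lipschitz homogeneous functions, is continuous, homogeneous of degree one, and positive (taking $t=0$ gives $V_i(x)\ge\|x\|$), settling (\ref{eqn:NS}a). For (\ref{eqn:NS}b), prepending an extra stretch of mode $i$ shows $V_i(e^{A_ih}x)\le e^{-\mu h}V_i(x)$ for $h\ge0$, whence $\dot V_i(x;A_ix)\le-\mu V_i(x)<0$. For (\ref{eqn:NS}c), the specific future ``dwell in $i$ up to time $\tau$, then switch to $j$ and continue optimally'' is admissible and yields $V_i(x)\ge e^{\mu\tau}V_j(e^{A_i\tau}x)$, i.e. $V_j(e^{A_i\tau}x)\le e^{-\mu\tau}V_i(x)<V_i(x)$ since $\mu\tau>0$. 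I expect the main obstacle to sit on this necessity side: rigorously establishing the uniform exponential estimate — the passage from pointwise asymptotic stability over the non-compact family $\St$ to a single decay rate — together with the continuity of the sup-defined $V_i$, because finiteness, continuity and the strict decrease all rest on that uniformity. The sufficiency direction is comparatively routine once the contraction factor $\rho<1$ has been extracted.
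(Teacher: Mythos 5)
The paper itself offers no proof of this theorem --- it is quoted verbatim from \cite{Wirth-2005} --- so your attempt has to stand entirely on its own; judged that way, both directions contain one unproven keystone each. On the sufficiency side, your skeleton is the standard one (split $e^{A_i\Delta_k}=e^{A_i\tau}e^{A_i(\Delta_k-\tau)}$, use (\ref{eqn:NS}b) on the stretch of length $\Delta_k-\tau$ and (\ref{eqn:NS}c) on the stretch of length $\tau$, and chain to get $v_{k+1}<v_k$), but the attractivity step has a genuine gap: the claim that ``the $V_i$ may be taken positively homogeneous'' is not something you are entitled to. The hypothesis hands you continuous functions only; the only way to manufacture homogeneous functions satisfying (\ref{eqn:NS}) would be to first know the system is asymptotically stable and then invoke the converse (necessity) direction --- which is circular. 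Without homogeneity the level set $\{y:V_i(y)=1\}$ need not be compact, and even if it were, a contraction factor extracted on one level set says nothing at other scales, so the uniform $\rho<1$ cannot be obtained; a strictly decreasing positive sequence $v_k$ by itself may converge to a positive limit, so pointwise strict decrease does not give attractivity. A smaller but real error in the same passage: $V_{\sigma(t)}(x(t))$ is \emph{not} nonincreasing --- it can jump up at switching instants, exactly as the paper's own Fig.~\ref{fig:2} illustrates; what your chain actually gives, and what you should use for Lyapunov stability, is the bound $V_{\sigma(t)}(x(t))\le v_k$ on $[t_k,t_{k+1})$ together with the decrease of the $v_k$.

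On the necessity side, your construction $V_i(x)=\sup\{e^{\mu t}\|x(t)\|\}$ over admissible futures starting in mode $i$ is exactly the right object (these are Wirth's Lyapunov norms; they are even convex, being suprema of seminorms, which is what makes the one-sided directional derivative in (\ref{eqn:NS}b) exist), and your prepending arguments verifying (\ref{eqn:NS}a)--(\ref{eqn:NS}c) are correct as far as they go. But the entire construction rests on the assertion that asymptotic stability for each $\sigma\in\St$ upgrades to a \emph{uniform} exponential bound $\|x(t)\|\le Ce^{-\lambda t}\|x(0)\|$ with $C,\lambda$ independent of the signal. You flag this yourself as the main obstacle, and it is precisely that: this uniformity is the substantive content of the cited theorem, not a routine preliminary, and finiteness of $\mathcal{A}$ plus Hurwitzness of each $A_i$ do not by themselves exclude non-uniform degradation of the decay as the switching pattern varies. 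Since no argument for it is supplied, the necessity direction as written assumes its key lemma. In short: correct architecture in both directions, but sufficiency silently strengthens the hypothesis (homogeneity) and necessity silently assumes uniform exponential stability; filling either hole requires the genuinely hard part of \cite{Wirth-2005}.
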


The above theorem shows  that stability of the system (\ref{eqn:1a}) the equivalent the existence of a mode-dependent Lyapunov function,  $V_{\s(t)}(x)$, which is strictly decreasing for non-switching times, i.e. $t \neq  t_k$, and it is strictly decreasing at the switching instances, i.e.,
$V_{\s(t_{k+1})}  \big( x(t_{k+1})  \big) < V_{\s(t_{k})} \big( x(t_{k}) \big)$,
however the the above theorem is not constructive.
In \cite{Geromel2006}, quadratic functions are  used to characterize the $V_i$'s,
but the conditions are only sufficient for stability.
In \cite{Blanchini-2010}, polyhedral (piece-wise linear) LFs are
used to characterize the $V_i$'s and it is shown that piece-wise linear functions that satisfy condition (\ref{eqn:NS}) are both necessary and sufficient for stability of (\ref{eqn:1a}). However, the conditions obtained are nonlinear and cannot be solved efficiently.
Motivated by the fact that piece-wise quadratic functions are universal for characterization of stability similar to piece-wise linear functions \cite{Hu-2003},
the following section derives the stability conditions using   piece-wise quadratic functions.

\section{Main Results}
\label{sec:main}

Given a positive integer $m$, a piece-wise quadratic function characterized  by $m$ quadric functions is considered as the candidate Lyapunov function $V_i$ in Theorem \ref{thm:1}, namely
$V_i (x) = \max \{ x^T P_{i,r} \, x : r = 1, \cdots, m \}$, $i \in \I_N$.
Using S-procedure, conditions of Theorem \ref{thm:1} can be converted into matrix inequalities.
The following theorem provides a sufficient condition for stability of (\ref{eqn:1a}) under dwell-time switching.

\begin{theorem}
\label{thm:2}
Assume that, for a given $\tau > 0$ and positive integer $m$,  there exist scalars $\alpha_{irs} > 0$, 
$\gamma_{jqirs} \ge 0$, $\sum_{s=1}^{m} \gamma_{jqirs} < 1$ such that
\begin{align}
& P_{i,r} \succ 0  \qquad \qquad \qquad \qquad \qquad  \forall i , r = 1,\cdots, m  \label{eqn:7}\\
& A_{i}^{T} P_{i,r} + P_{i,r} A_{i}   \prec  \sum_{\substack{s=1\\s\ne r}}^{m} \alpha_{irs} (P_{i,s} - P_{i,r})  \nonumber \\
& \qquad \qquad \qquad \qquad \qquad \qquad \qquad    \forall i , r = 1,\cdots, m \label{eqn:8}\\
& e^{A_{i}^{T} \tau }  P_{j, \bbb q \kkk} \, e^{A_{i} \tau }  \prec P_{i,r} + \sum_{\substack{s=1\\s\ne r}}^{m} \gamma_{jqirs} (P_{i,s} - P_{i,r} )   \nonumber \\
& \qquad \qquad \qquad \qquad \qquad \qquad  \forall i \neq j,
 r = 1,\cdots, m  \nonumber \\
&  \qquad \qquad \qquad \qquad \qquad \qquad   \quad q = 1,\cdots, m   \label{eqn:9}
\end{align}
Then, system (\ref{eqn:1a}) is asymptotically stable for every  $\s \in \St$.
\end{theorem}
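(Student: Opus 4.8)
The plan is to verify that the piece-wise quadratic candidates $V_i(x) = \max\{ x^T P_{i,r}\, x : r = 1,\dots,m\}$ satisfy the three requirements of condition (\ref{eqn:NS}) in Theorem~\ref{thm:1}, namely positivity, strict decrease of the directional derivative within each mode, and strict decrease across a switch after a dwell of length $\tau$. The three matrix inequalities (\ref{eqn:7})--(\ref{eqn:9}) are exactly the S-procedure relaxations of these three requirements, so the whole argument reduces to showing that each BMI implies its corresponding pointwise inequality. Throughout, the central bookkeeping device is the notion of an \emph{active index}: for a fixed $x$, call $r$ active for mode $i$ if $V_i(x) = x^T P_{i,r} x$, which is equivalent to $x^T(P_{i,s} - P_{i,r})x \le 0$ for every $s$.

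Positivity is immediate, since each $P_{i,r}\succ 0$ by (\ref{eqn:7}) makes every form $x^T P_{i,r}x$ positive for $x\ne 0$, hence so is their pointwise maximum. For the derivative condition I would invoke the formula (\ref{eqn:dot-max}), which writes $\dot V_i(x;A_i x)$ as the maximum of $x^T(A_i^T P_{i,s} + P_{i,s} A_i)x$ over the active indices $s$. Fixing any active $s$, we have $x^T(P_{i,t}-P_{i,s})x \le 0$ for all $t$, and since $\alpha_{ist}>0$ the combination $\sum_{t\ne s}\alpha_{ist}\,x^T(P_{i,t}-P_{i,s})x$ is nonpositive; pre- and post-multiplying (\ref{eqn:8}) by $x^T$ and $x$ (with $x\ne 0$) then forces $x^T(A_i^T P_{i,s}+P_{i,s}A_i)x < 0$. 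As every term entering the maximum in (\ref{eqn:dot-max}) is strictly negative, $\dot V_i(x;A_i x)<0$.

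The decrease-across-switches requirement is the main point, because both sides are maxima. I would write $V_j(e^{A_i\tau}x) = \max_q x^T\big(e^{A_i^T\tau} P_{j,q}\,e^{A_i\tau}\big)x$, fix an active index $r^\ast$ for mode $i$ at $x$ so that $x^T(P_{i,s}-P_{i,r^\ast})x\le 0$ for all $s$, and test (\ref{eqn:9}) at $(r^\ast,q)$ against $x$ for each $q$. This gives $x^T\big(e^{A_i^T\tau}P_{j,q}e^{A_i\tau}\big)x < x^T P_{i,r^\ast}x + \sum_{s\ne r^\ast}\gamma_{jqir^\ast s}\,x^T(P_{i,s}-P_{i,r^\ast})x$, and since each $\gamma_{jqir^\ast s}\ge 0$ the sum is nonpositive, leaving $x^T\big(e^{A_i^T\tau}P_{j,q}e^{A_i\tau}\big)x < x^T P_{i,r^\ast}x = V_i(x)$. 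Because this strict bound holds for every $q$ over a finite index set, the maximum over $q$ preserves it and yields $V_j(e^{A_i\tau}x) < V_i(x)$ for all $x\ne 0$ and $i\ne j$.

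I expect the genuinely delicate step to be this last one: one must apply the S-procedure at a single active index $r^\ast$ while the left-hand maximum ranges over all $q$, and it is precisely the finiteness of the index set that allows the strict inequalities to survive the outer maximum. The constraint $\sum_s \gamma_{jqirs} < 1$ plays a supporting, well-posedness role, since together with $\gamma_{jqirs}\ge 0$ it rewrites the right-hand side of (\ref{eqn:9}) as the convex combination $\big(1-\sum_{s\ne r}\gamma_{jqirs}\big)P_{i,r} + \sum_{s\ne r}\gamma_{jqirs}P_{i,s}$, confirming it is positive definite and that the ordering in (\ref{eqn:9}) is consistent; it is not otherwise needed for the active-index argument above. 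Having established all three items of (\ref{eqn:NS}), Theorem~\ref{thm:1} then delivers asymptotic stability of (\ref{eqn:1a}) for every $\s\in\St$.
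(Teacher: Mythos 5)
Your proposal is correct and follows essentially the same route as the paper: verify the three conditions of Theorem~\ref{thm:1} for $V_i(x)=\max_r x^T P_{i,r}x$, using the active-index/S-procedure argument with $\gamma_{jqirs}\ge 0$ and $x^T(P_{i,s}-P_{i,r})x\le 0$ to handle the switching condition, then take the maximum over $q$. The only differences are cosmetic: you spell out the derivative step that the paper delegates to a citation of the composite-quadratic-function literature, and you correctly observe (as the paper's proof implicitly does) that the constraint $\sum_s\gamma_{jqirs}<1$ is not needed for this particular argument.
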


\begin{proof}
Let $V_i(x) = \max \{ x^T P_{i,r} \, x : r = 1, \cdots, m \}$, $i \in \I_N$. We have to show that conditions \ref{eqn:NS}(a)-(c) are satisfied.
a) Obviously, (\ref{eqn:7}) implies that $V_i (x) > 0$ for all $x \ne 0$ and for all $i \in \I_N$.
b) From (\ref{eqn:8}) and (\ref{eqn:dot-max}), it follows that  $\dot V_i (x ; A_i x) < 0$ for all $x \ne 0$ and for all $i \in \I_N$, see \cite{Hu-2003} for details.
c) Without loss of generality consider $V_j(x) = \max \{ x^T P_{j,q} \, x : q = 1, \cdots, m \}$, $j \neq i$. We have to show that
$V_j(e^{A_i \tau} x) < V_i(x)$.  To this end, consider $(e^{A_i \tau} x)^T P_{j,q} \, (e^{A_i \tau} x)$ for any $q = 1, \cdots, m$.
For every $x$ such that $x^T P_{i,r} \, x \ge  x^T P_{i,s} \, x$, $s = 1, \cdots , m$, $V_i(x) = x^T P_{i,r} x$. This and
(\ref{eqn:9}) together impliy that
\begin{align*}
x^T (e^{A_{i}^{T} \tau }  P_{j,  q} \, e^{A_{i} \tau } ) x  &< \underbrace{x^T P_{i,r} x}_{V_i(x)} +  \sum_{\substack{s=1\\s\ne r}}^{m} \gamma_{jqirs} \underbrace{x^T(P_{i,s} - P_{i,r} )x}_{\le 0} \\
&< V_i(x)
\end{align*}
Thus, $V_j(e^{A_i \tau} x) = \max_q \{ x^T e^{A_i^T \tau}  P_{j,q} \, e^{A_i \tau} x  \}  < V_i(x)$. The same argument holds for all the other regions where $x^T P_{i,r} \, x \le  x^T P_{i,s}\, x$. Hence conditions  (\ref{eqn:NS}a)-(\ref{eqn:NS}c) of Theorem (\ref{thm:1}) are all satisfied and the proof is complete.
\end{proof}


\begin{remark}
Note that, for $m = 1$, $V_i(x) = x^T P_{i1}\, x$ and conditions (\ref{eqn:7})-(\ref{eqn:9}) become:  $P_{i,1} \succ 0, \,  \forall i$,
\begin{align*}
& A_{i}^{T} P_{i,1} + P_{i,1} A_{i}   \prec  0 & \forall i   \\
& e^{A_{i}^{T} \tau }  P_{j, 1} \, e^{A_{i} \tau }  \prec P_{i,1}    & \forall i \neq j
\end{align*}
and it retrieves the conditions appeared in \cite{Geromel2006}.
\end{remark}

For a given $m$, define   $\tau_{[m]}$ as the smallest upper bound of $\tau_{min}$ guaranteed by Theorem \ref{thm:2}, i.e.
$$ \tau_{[m]} :=\inf \{ \tau \ge 0: (\ref{eqn:7})-(\ref{eqn:9}) \textrm{ hold} \}.$$
The following result provides a key property of the conditions of Theorem \ref{thm:2}, which allows us to calculate $\tau_{[m]} $ via a bisection search where at each iteration the conditions (\ref{eqn:7})-(\ref{eqn:9}) are tested.

\begin{theorem}
\label{thm:3}
Assume that (\ref{eqn:7})-(\ref{eqn:9}) hold for some $\tau \ge 0$ and positive integer $m$. Then, conditions (\ref{eqn:7})-(\ref{eqn:9})  hold  for $\tau + \delta$ for any $\delta \ge 0$ with the same $m$.
\end{theorem}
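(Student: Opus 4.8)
The plan is to handle the three families of conditions separately. Conditions (\ref{eqn:7}) and (\ref{eqn:8}) do not involve $\tau$ at all, so with the \emph{same} matrices $P_{i,r}$ and the same multipliers $\alpha_{irs}$ they continue to hold verbatim for $\tau+\delta$; the entire content of the theorem therefore lies in propagating (\ref{eqn:9}) from $\tau$ to $\tau+\delta$. Here I would exploit the semigroup identity $e^{A_i(\tau+\delta)}=e^{A_i\delta}e^{A_i\tau}$, which gives
\begin{align*}
e^{A_i^T(\tau+\delta)}P_{j,q}\,e^{A_i(\tau+\delta)} = e^{A_i^T\delta}\big(e^{A_i^T\tau}P_{j,q}\,e^{A_i\tau}\big)e^{A_i\delta}.
\end{align*}
Since $e^{A_i\delta}$ is invertible, conjugation preserves strict matrix inequalities, so applying (\ref{eqn:9}) at $\tau$ inside the conjugation yields
\begin{align*}
e^{A_i^T(\tau+\delta)}P_{j,q}\,e^{A_i(\tau+\delta)} \prec e^{A_i^T\delta}\Big(P_{i,r}+\sum_{s\ne r}\gamma_{jqirs}(P_{i,s}-P_{i,r})\Big)e^{A_i\delta}.
\end{align*}

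The next step is to read the right-hand side of (\ref{eqn:9}) as a convex combination. Because $\gamma_{jqirs}\ge 0$ and $\sum_{s\ne r}\gamma_{jqirs}<1$, setting $\beta_r:=1-\sum_{s\ne r}\gamma_{jqirs}>0$ and $\beta_s:=\gamma_{jqirs}$ for $s\ne r$ gives $P_{i,r}+\sum_{s\ne r}\gamma_{jqirs}(P_{i,s}-P_{i,r})=\sum_{s}\beta_s P_{i,s}$ with $\beta_s\ge 0$, $\sum_s\beta_s=1$. Thus the problem reduces to showing that the conjugated combination $\sum_s\beta_s\,e^{A_i^T\delta}P_{i,s}e^{A_i\delta}$ is itself dominated, in the $\preceq$ order, by some convex combination $\sum_t\beta'_t P_{i,t}$ of the $P_{i,t}$; any such $\beta'$ (after an arbitrarily small perturbation placing positive weight on the index $r$) furnishes admissible multipliers $\gamma'_{jqirs}$ for (\ref{eqn:9}) at $\tau+\delta$.

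The heart of the argument, and the step I expect to be the main obstacle, is this domination, which is precisely where condition (\ref{eqn:8}) must be spent. Writing $F_s(\delta):=e^{A_i^T\delta}P_{i,s}e^{A_i\delta}$, differentiation gives $\dot F_s(\delta)=e^{A_i^T\delta}(A_i^TP_{i,s}+P_{i,s}A_i)e^{A_i\delta}$, and substituting (\ref{eqn:8}) for index $s$ yields the system of matrix differential inequalities
\begin{align*}
\dot F_s(\delta)\prec \sum_{t\ne s}\alpha_{ist}\big(F_t(\delta)-F_s(\delta)\big),\qquad F_s(0)=P_{i,s}.
\end{align*}
The coupling matrix $\Pi$ with $\Pi_{st}=\alpha_{ist}$ for $t\ne s$ and $\Pi_{ss}=-\sum_{t\ne s}\alpha_{ist}$ has nonnegative off-diagonal entries and zero row sums, i.e.\ it is the generator of a continuous-time Markov chain, so $e^{\Pi\delta}$ is row-stochastic. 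Comparing $F$ with the solution $Y$ of the associated equality system $\dot Y_s=\sum_t \Pi_{st}Y_t$, whose monotone (Metzler) structure should give $F_s(\delta)\preceq Y_s(\delta)$ for $\delta\ge 0$ through a comparison principle on the cone of positive semidefinite matrices, yields $F_s(\delta)\preceq Y_s(\delta)=\sum_t[e^{\Pi\delta}]_{st}P_{i,t}$, a genuine convex combination of the $P_{i,t}$. The delicate points are making this matrix comparison principle rigorous and checking that the strictness of (\ref{eqn:8}) survives; an equivalent route is to prove directly, by a Nagumo subtangentiality argument, that the order-downward closure of $\operatorname{conv}\{P_{i,1},\dots,P_{i,m}\}$ is forward invariant under the flow $\dot{\bar Q}=A_i^T\bar Q+\bar Q A_i$, the subtangency being exactly the content of (\ref{eqn:8}).

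Assembling the pieces, $\sum_s\beta_s F_s(\delta)\preceq\sum_t\beta'_t P_{i,t}$ with $\beta'_t:=\sum_s\beta_s[e^{\Pi\delta}]_{st}\ge0$ and $\sum_t\beta'_t=1$, whence
\begin{align*}
e^{A_i^T(\tau+\delta)}P_{j,q}\,e^{A_i(\tau+\delta)}\prec \sum_t\beta'_t P_{i,t}.
\end{align*}
Re-expressing $\sum_t\beta'_tP_{i,t}$ in the form $P_{i,r}+\sum_{s\ne r}\gamma'_{jqirs}(P_{i,s}-P_{i,r})$, after perturbing $\beta'$ so that $\beta'_r>0$ and hence $\sum_{s\ne r}\gamma'_{jqirs}<1$ (legitimate since $\prec$ is an open condition, and the multipliers may be chosen per $(i,j,q,r)$), establishes (\ref{eqn:9}) at $\tau+\delta$ for every $i\ne j$ and every $q,r$. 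Together with the unchanged (\ref{eqn:7})--(\ref{eqn:8}), this proves the claim for all $\delta\ge0$. I note in passing that at the purely Lyapunov level the monotonicity is immediate, since $V_j(e^{A_i(\tau+\delta)}x)=V_j(e^{A_i\tau}e^{A_i\delta}x)<V_i(e^{A_i\delta}x)\le V_i(x)$ by (\ref{eqn:8})--(\ref{eqn:9}); the work above is exactly what is needed to upgrade this back to the feasibility of the matrix inequalities, which the (lossy) S-procedure does not hand us for free.
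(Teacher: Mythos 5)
Your proposal is correct in substance, and it takes a genuinely different---and more careful---route than the paper. The paper's own proof is much shorter: it conjugates (\ref{eqn:9}) by $e^{A_i \delta}$ exactly as you do, but then disposes of the term $e^{A_i^T\delta}P_{i,r}\,e^{A_i\delta}$ by claiming $e^{A_i^T\delta}P_{i,r}\,e^{A_i\delta}\prec P_{i,r}$, justified only by the observation that (\ref{eqn:8}) makes $V_i$ decrease along the flow of mode $i$, and it then concludes directly that $V_j(e^{A_i(\tau+\delta)}x)<V_i(x)$, ``hence the theorem holds.'' Your proof deliberately avoids that per-matrix contraction, and rightly so: decrease of the pointwise maximum $V_i$ does not imply that each individual quadratic $x^TP_{i,r}x$ decreases (under (\ref{eqn:8}) the matrices $A_i^TP_{i,r}+P_{i,r}A_i$ may be indefinite), so the paper's intermediate claim is a non sequitur; moreover, even granting it, the leftover terms $e^{A_i^T\delta}(P_{i,s}-P_{i,r})e^{A_i\delta}$ have no controlled sign on the region where $P_{i,r}$ attains the maximum, and the paper's final conclusion is in any case the Lyapunov-level inequality (\ref{eqn:NS}c) at $\tau+\delta$ rather than feasibility of the matrix inequalities (\ref{eqn:7})--(\ref{eqn:9}) at $\tau+\delta$, which is what the theorem literally asserts and what the bisection procedure of Section \ref{sec:main} requires. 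Your route---reading the right-hand side of (\ref{eqn:9}) as a convex combination and dominating the conjugated matrices $e^{A_i^T\delta}P_{i,s}\,e^{A_i\delta}$ by $\sum_t [e^{\Pi\delta}]_{st}P_{i,t}$ via the Markov-generator structure of the $\alpha$'s---produces explicit new multipliers $\gamma'_{jqirs}\ge 0$ with $\sum_{s\ne r}\gamma'_{jqirs}<1$, i.e.\ genuine feasibility at $\tau+\delta$, which is strictly more than the paper proves. The comparison principle you flag as the delicate step is indeed valid and can be made rigorous by a standard perturbation argument: replace the differences $Y_s-F_s$ by $Y_s-F_s+\epsilon e^{K\delta}I$, note that at the first vanishing eigenvalue the time derivative of the corresponding quadratic form would have to be nonpositive while the differential inequality forces it to be at least $\epsilon K e^{K\delta}>0$, and let $\epsilon\downarrow 0$. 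Note also that the strictness of (\ref{eqn:8}) need not ``survive'' this step: a nonstrict comparison $F_s(\delta)\preceq Y_s(\delta)$ suffices, because strictness of the final inequality is already supplied by conjugating the strict inequality (\ref{eqn:9}) at $\tau$ with the invertible matrix $e^{A_i\delta}$; and $\beta'_r>0$ comes for free for $\delta>0$, since $\alpha_{irs}>0$ makes $e^{\Pi\delta}$ entrywise positive, so no perturbation of $\beta'$ is even needed. Finally, your closing remark is exactly on target: the semigroup argument $V_j(e^{A_i\tau}e^{A_i\delta}x)<V_i(e^{A_i\delta}x)\le V_i(x)$ is what the paper's proof in effect establishes (and all that it establishes); the extra work in your proposal is precisely what closes the gap between that statement and the theorem as stated.
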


\begin{proof}
Suppose that (\ref{eqn:7})-(\ref{eqn:9}) hold, and let  $V_i(x) = \max \{ x^T P_{i,r} \, x : r =1 , \cdots, m \}$. Consider any $ \delta \ge 0$. From  (\ref{eqn:8})  it follows that $\dot V_i(x ; A_i x) < 0$ and hence $V_i(x(\delta)) < V_i(x(0))$ for every $x(0)$, which implies that
$e^{A_i^T \delta} P_{i,r} e^{A_i \delta} \prec P_{i,r}$.
Now, pre- and post-multiply  (\ref{eqn:9}) by $e^{A_i^T \delta}$ and $e^{A_i \delta}$ respectively. It follows that

{\footnotesize
\begin{align*}
 e^{A_i^T (\tau +\delta)} P_{j,q} e^{A_i (\tau +\delta)} & \prec e^{A_i^T \delta} P_{i,r} e^{A_i \delta}  \\
 & +  \sum_{\substack{s=1\\s\ne r}}^{m} \gamma_{jqirs} e^{A_i^T \delta} (P_{i,s} - P_{i,r} ) e^{A_i \delta} \\
& \prec P_{i,r} + \sum_{\substack{s=1\\s\ne r}}^{m} \gamma_{jqirs} e^{A_i^T \delta} (P_{i,s} - P_{i,r} ) e^{A_i \delta}
\end{align*}
}
Thus, $V_j (e^{A_i^T (\tau +\delta)} x ) < V_i(x)$  and  the theorem holds.
\end{proof}

Another important property of conditions of Theorem \ref{thm:2} that allows us to calculate $\tau_{[m]} $ is stated in the following lemma.

\begin{lemma}
\label{lem:1}
For a given $\tau$, suppose   that   conditions (\ref{eqn:7})-(\ref{eqn:9}) are feasible for some $m$. Then,  they are also feasible for $m+1$.
\end{lemma}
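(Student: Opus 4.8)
The plan is to exhibit a feasible $(m+1)$-configuration by \emph{padding} the given $m$-configuration with a duplicate of one of its quadratics. Concretely, given the feasible data $\{P_{i,r}\}_{r=1}^{m}$, $\{\alpha_{irs}\}$, $\{\gamma_{jqirs}\}$ for dimension $m$, define the new matrices by $P_{i,m+1}:=P_{i,m}$ for every $i\in\I_N$ and keep $P_{i,r}$ unchanged for $r\le m$. The point of this choice is that the candidate function $V_i(x)=\max_r x^TP_{i,r}\,x$ is literally unaffected by appending a copy of an existing quadratic, so feasibility ought to survive; the only work is bookkeeping on the scalar multipliers, together with one continuity estimate. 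Condition (\ref{eqn:7}) is then immediate since $P_{i,m+1}=P_{i,m}\succ0$.

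For condition (\ref{eqn:9}) I would extend the multipliers as follows. Keep $\gamma_{jqirs}$ for all indices $\le m$, and set every new multiplier whose direction index equals $m+1$ (i.e. $\gamma_{jqir,m+1}$, and likewise the coefficient of $P_{i,m+1}-P_{i,r}$) to zero; where the outer index $q$ or $r$ equals $m+1$, use the duplication $P_{j,m+1}=P_{j,m}$ and $P_{i,m+1}=P_{i,m}$ to rewrite the inequality. In each of the four cases (both, one, or neither of $q,r$ equal to $m+1$) the instance of (\ref{eqn:9}) then collapses \emph{verbatim} to an instance of the original (\ref{eqn:9}) with $q$ and/or $r$ replaced by $m$ (a redundant $s=m$ term of the form $\gamma\,(P_{i,m}-P_{i,m})=0$ may appear and is harmless), and so it holds. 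Because I only inserted zeros, the new direction never contributes to the sum, so $\sum_{s=1}^{m+1}\gamma_{jqirs}<1$ is inherited from the original constraint.

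The only genuine subtlety, and hence the main obstacle, lies in condition (\ref{eqn:8}), because the multipliers $\alpha_{irs}$ are required to be \emph{strictly} positive and therefore cannot be zeroed out. For each old index $r<m$ the new right-hand side acquires an extra term $\alpha_{i,r,m+1}\,(P_{i,m+1}-P_{i,r})=\alpha_{i,r,m+1}\,(P_{i,m}-P_{i,r})$, a strictly positive multiple of a generally \emph{indefinite} matrix, which perturbs the inequality. I would dispose of this by a continuity argument: strictness of the original (\ref{eqn:8}) means the slack $\sum_{s\ne r}\alpha_{irs}(P_{i,s}-P_{i,r})-(A_i^TP_{i,r}+P_{i,r}A_i)\succeq c_{ir}I$ for some $c_{ir}>0$, so any $\alpha_{i,r,m+1}$ chosen small enough that $\alpha_{i,r,m+1}\,\|P_{i,m}-P_{i,r}\|<c_{ir}$ keeps the perturbed right-hand side dominating the left-hand side. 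For the fresh index $r=m+1$, the duplication makes (\ref{eqn:8}) coincide exactly with the original $r=m$ case (the term $\alpha_{i,m+1,m}(P_{i,m}-P_{i,m+1})$ vanishes), so it holds for any positive choice of multipliers. Selecting all new $\alpha$'s to be small positive thus yields a feasible configuration for $m+1$, completing the proof.
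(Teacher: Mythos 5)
Your proposal is correct and follows essentially the same route as the paper's proof: both exhibit a feasible $(m+1)$-configuration by duplicating the last quadratic, setting $P_{i,m+1}:=P_{i,m}$ and keeping the remaining $P_{i,r}$'s unchanged. The paper gives only this one-line construction, whereas you additionally supply the multiplier bookkeeping it leaves implicit---in particular the small-but-positive choice of the new $\alpha_{i,r,m+1}$'s forced by the strict positivity requirement of Theorem~\ref{thm:2}, which you justify with a correct slack/perturbation argument.
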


\begin{proof}
Suppose a set of matrices $P_{i,r} \succ 0$, $r=1,\cdots, m$ satisfies conditions of Theorem \ref{thm:2} for a given $\tau$.   A  feasible solution for the case of $m+1$ is obtained by setting $P_{i,m+1} = P_{i,m}$, and keeping the rest of the $P_{i,r}$'s the same.
\end{proof}

An immediate conclusion of Lemma \ref{lem:1} and Theorem \ref{thm:3}, is that the sequence of $\tau_{[m]}$, $m = 1,2, \cdots $ is a  non-increasing sequence  and the limit
$\bar \tau_{[m]} := \lim_{m \rightarrow \infty} \tau_{[m]}$ exists. Obviously, $\bar \tau_{[m]}$ is an upper bound on the minimum dwell time, i.e.
$\bar \tau_{[m]} \ge \tau_{min}$.

\subsection{Numerical Solution of Conditions of Theorem \ref{thm:2}}
The stability conditions (\ref{eqn:7})-(\ref{eqn:9}) appeared in Theorem \ref{thm:2} are bilinear matrix inequalities with respect to variables
$P_{i,r}$, $\alpha_{irs}, \gamma_{jirs}$. Of course linearity is recovered if one fixes the matrices $P_{ir}$ or the scalars $\a$ and $\g$.
Unfortunately, nested iterations by iteratively fixing $P_{ir}$ and computing proper values of $\a, \g$'s by means of convex optimization does not converge.
In fact, it is known that finding the global solutions of BMI problems are NP-hard.
We, however,  find out that using the path-following method \cite{path-following}, we can find feasible solutions for the BMI conditions (\ref{eqn:7})-(\ref{eqn:9}).  The basic idea of path-following method  is to use the first order approximation of the variables. To this end,  we perturb the variables  to $P_{ir} + \D P_{ir}$, $\a_{irs} + \D \a_{irs}$ and $\g_{jirs} + \D \g_{jqirs}$.  Then, by ignoring the higher order terms, the conditions of Theorem \ref{thm:2} become:
{\small
\begin{subequations}
\label{eqn:Path}
\begin{align}
&  P_{i,r} + \D P_{i,r} \succ 0 \\
&  A_{i}^{T} P_{i,r} + P_{i,r}  A_i + A_{i}^{T} \Delta P_{i,r} + \Delta P_{i,r} A_{i}  \prec \sum_{s \neq r}   \Big[  \alpha_{irs} (P_{i,s} - P_{i,r} )  \nonumber \\
& \qquad \quad + \alpha_{irs}  ( \Delta P_{i,s} - \Delta P_{i,r} ) + ( P_{i,s} -  P_{i,r} )  \Delta \alpha_{irs} \Big]  \\
&  e^{A_{i}^{T} \tau }  P_{j,r}  \, e^{A_{i} \tau }  +   e^{A_{i}^T \tau }  \Delta P_{j,r} \, e^{A_{i} \tau }  \nonumber \\
& \qquad \quad  \prec  \sum_{s\neq r} \Big[ \g_{jqirs} (P_{i,s} + \Delta P_{i,s}) +   \D \g_{i,r,s} P_{i,s} \Big]
\end{align}
\end{subequations}
}

For  given $P_{ir}, \a_{irs} , \g_{jqigs}$ the above conditions  are LMI with respect to variables $\D P_{ir}, \D\a_{irs} , \D\g_{jqirs}$.

One can start with a feasible solution to (\ref{eqn:7})-(\ref{eqn:9}) and then  solve  (\ref{eqn:Path}a)-(\ref{eqn:Path}c) for $\D P_{ir}, \D\a_{irs} , \D\g_{jqirs}$. The next step is to update the scalars by letting $\a_{irs} \leftarrow \a_{irs} +  \D\a_{irs}$ and $\g_{jirs} \leftarrow \g_{jqirs} +  \D\g_{jqirs}$. Now by fixing these variables, (\ref{eqn:7})-(\ref{eqn:9}) is an LMI in $P_{ir}$ and its feasibility can be checked efficiently. This iteration is then combined with a bisection search on $\tau$ to find the minimum $\tau_{[m]}$ for a given $m$. When  $\tau_{[m]}$   cannot be improved with a fixed $m$, we increase the $m$ until $\tau_{[m+1]} = \tau_{[m]}$.
Note that condition of Theorem \ref{thm:2} for $m=1$ is an LMI and the solution to that can be used for the initialization of the above iterative procedure.

%

\section{Example}
\label{sec:example}

To illustrate the effectiveness  of the proposed   method, an examples is presented in this section.
%
The  example is taken from \cite{Blanchini-2010} where the system matrices are
{\small
$$A_1=
\left[
\begin{array}{cc}
0  & 1     \\
 -10 &  -1    \\
\end{array}
\right]
, \;\;A_2=
\left[
\begin{array}{cc}
0  & 1     \\
 -0.1 &  -0.5    \\
\end{array}
\right]$$
}

It has already been seen that for this system the minimum dwell time is 2.7078 \cite{Blanchini-2010}.
The dwell-time obtained from method of \cite{Blanchini-2010} with a polyhedral characterization is $2.7420$.
The sequence of the upper bounds of $\tau_{min}$ obtained from our proposed method is  shown in Table \ref{table1}, where the minimum dwell time $\tau_{min}=2.70801$ can be obtained with $m=4$ quadratic functions.
The $P_{ir}$ are reported here for verification:\\
{\scriptsize
$P_{11} =
\left[
\begin{array}{cc}
196.665  & 17.322     \\
 * &  25.408   \\
\end{array}
\right]$,
$P_{12} =
\left[
\begin{array}{cc}
196.502  & 38.442     \\
 * &  12.137    \\
\end{array}
\right]$, \\
$P_{13} =
\left[
\begin{array}{cc}
187.555  &  4.997     \\
 * &  19.280    \\
\end{array}
\right]$,
$P_{14} =
\left[
\begin{array}{cc}
196.502  & 38.455    \\
 * &  11.973    \\
\end{array}
\right]$, \\
$P_{21}=P_{22}=P_{23}=P_{24}=
\left[
\begin{array}{cc}
137.512   & 177.991  \\
 * &  360.358    \\
\end{array}
\right]$
}

\begin{table}[h]
\begin{center}
\setlength{\tabcolsep}{5 pt}
\renewcommand{\arraystretch}{1}
\begin{tabular}{c|c c c c c}
$m$  &   1 & 2 & 3 & 4 & 5    \\ \hline
$\tau_{[m]}$ &  2.75090 & 2.70794 & 2.70782  & 2.70781 & 2.70781
\end{tabular}
\end{center}
\caption{The sequence of upper bounds of $\tau_{min}$}
\label{table1}
\end{table}

\begin{figure}[h]
\centering
\includegraphics[trim = 1mm 5mm 1mm 7mm, clip=true,width=.3\textwidth] {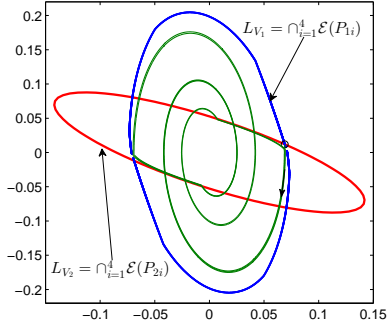}
\caption{Level set of $V_1, V_2$ for $m=4$.}
\label{fig:1}
\end{figure}

Figure \ref{fig:1} shows the level-sets of the Lyapunov functions, i.e. $L_{V_i} = \cap_{r=1}^4 \E (P_{ir})$ and a state trajectory from $x_0= (0.0689,0.0119)$ under periodic switching where $t_{k+1} - t_k = \tau_{[4]}$ and $\s(0) = 1$. The Lyapunov function $V_{\s}(x(t))$ for this trajectory is also shown in Fig. \ref{fig:2}(a). While $V_{\s}(x(t))$  increases at switching instants, the  sequence of $V_{\s}(x(t_k))$ is monotonically decreasing (see Fig. \ref{fig:2}(b)).

\begin{figure}[h]
\centering
\includegraphics[trim = 1mm 3mm 1mm 10mm, clip=true, width=.35\textwidth] {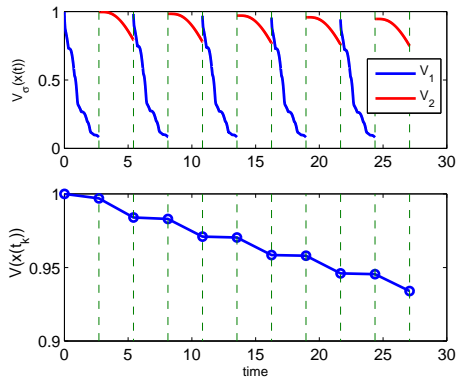}
\caption{Illustration of $V_{\s}(x(t)), V_{\s}(x(t_k))$.}
\label{fig:2}
\end{figure}

\section{Conclusions}
\label{sec:conclusions}
Piece-wise quadratic  functions are utilized to derive sufficient conditions for stability of switched linear systems under dwell-time switching.  The stability conditions are  in the form of  bilinear  matrix inequalities and are solved with path-following method.  By increasing the number of  quadratic functions, a sequence of upper bounds of the minimum dwell time is obtained.
Numerical examples suggested that the conditions has the potential to be also necessary provided that  the number of  quadratic functions is sufficiently large. Further investigation is required to prove the necessity of the proposed conditions.


\section*{Acknowledgment}

The financial support of A*STAR industrial robotic grant (Grant No. R-261-506-005-305) is gratefully acknowledged.



%

\bibliographystyle{IEEEtran}
\bibliography{ref_switch}

\begin{thebibliography}{10}
\providecommand{\url}[1]{#1}
\csname url@rmstyle\endcsname
\providecommand{\newblock}{\relax}
\providecommand{\bibinfo}[2]{#2}
\providecommand\BIBentrySTDinterwordspacing{\spaceskip=0pt\relax}
\providecommand\BIBentryALTinterwordstretchfactor{4}
\providecommand\BIBentryALTinterwordspacing{\spaceskip=\fontdimen2\font plus
\BIBentryALTinterwordstretchfactor\fontdimen3\font minus
  \fontdimen4\font\relax}
\providecommand\BIBforeignlanguage[2]{{%
\expandafter\ifx\csname l@#1\endcsname\relax
\typeout{** WARNING: IEEEtran.bst: No hyphenation pattern has been}%
\typeout{** loaded for the language `#1'. Using the pattern for}%
\typeout{** the default language instead.}%
\else
\language=\csname l@#1\endcsname
\fi
#2}}

\bibitem{Branicky1998}
M.~Branicky, ``Multiple lyapunov functions and other analysis tools for
  switched and hybrid systems,'' \emph{IEEE Trans. Automatic Control}, vol.~43,
  no.~4, pp. 475--482, 1998.

\bibitem{book_Liberzon}
D.~Liberzon, \emph{Switching in Systems and Control}.\hskip 1em plus 0.5em
  minus 0.4em\relax Birkhauser, Boston, 2003.

\bibitem{Shorten2007}
R.~Shorten, F.~Wirth, O.~Mason, K.~Wulff, and C.~King, ``Stability criteria for
  switched and hybrid systems,'' \emph{SIAM Review}, vol.~49, no.~4, pp.
  545--592, 2007.

\bibitem{HaiLin2009}
H.~Lin and P.~J. Antsaklis, ``Stability and stabilizability of switched linear
  systems: A survey of recent results,'' \emph{IEEE Trans. Automatic Control},
  vol.~45, no.~2, pp. 308--322, 2009.

\bibitem{switched_Lyapunov2002}
J.~Daafouz, P.~Riedinger, and C.~Iung, ``Stability analysis and control
  synthesis for switched systems: a switched lyapunov function approach,''
  \emph{IEEE Trans. Automatic Control}, vol.~47, no.~11, pp. 1883--1887, 2002.

\bibitem{dehghan2014computation}
M.~Dehghan, ``On the computation of domain of attraction of saturated switched
  systems under dwell-time switching,'' in \emph{International Conference of
  Control, Dynamic Systems, and Robotics}, 2014.

\bibitem{Hu2008}
T.~Hu, L.~Ma, and Z.~Lin, ``Stabilization of switched systems via composite
  quadratic functions,'' \emph{IEEE Trans. Automatic Control}, vol.~53, no.~11,
  pp. 2571--2585, 2008.

\bibitem{Blanchini20072}
F.~Blanchini, S.~Miani, and C.~Savorgnan, ``Stability results for linear
  parameter varying and switching systems,'' \emph{Automatica}, vol.~43,
  no.~10, pp. 1817--1823, 2007.

\bibitem{Dehghan-sat}
M.~Dehghan, C.-J. Ong, and P.~C.~Y. Chen, ``Enlarging domain of attraction of
  switched linear systems in the presence of saturation nonlinearity,'' in
  \emph{ACC 2011}, San Francisco, USA, 2011, pp. 1994--1999.

\bibitem{Hua2010}
T.~Hu and F.~Blanchini, ``Non-conservative matrix inequality conditions for
  stability/stabilizability of linear differential inclusions,''
  \emph{Automatica}, vol.~46, no.~1, pp. 190--196, 2010.

\bibitem{Geromel2006}
J.~C. Geromel and P.~Colaneri, ``Stability and stabilization of continuous-time
  switched systems,'' \emph{SIAM Journal of Control and Optimization}, vol.~45,
  no.~5, pp. 1915--1930, 2006.

\bibitem{Blanchini-2010}
F.~Blanchini and P.~Colaneri, ``Vertex/plane characterization of the dwell-time
  property for switching linear systems,'' in \emph{Decision and Control (CDC),
  2010 49th IEEE Conference on}, 2010, pp. 3258 --3263.

\bibitem{our11}
M.~Dehghan and C.-J. Ong, ``Discrete-time switched linear system with
  constraints: characterization and computation of invariant sets under
  dwell-time consideration,'' \emph{Automatica}, vol.~48, no.~5, pp. 964--969,
  2012.

\bibitem{Chesi2012}
G.~Chesi, P.~Colaneri, J.~Geromel, R.~Middleton, and R.~Shorten, ``A
  nonconservative {LMI} condition for stability of switched systems with
  guaranteed dwell time,'' \emph{IEEE Transactions on Automatic Control},
  vol.~57, no.~5, pp. 1297--1302, may 2012.

\bibitem{our22}
M.~Dehghan and C.-J. Ong, ``Characterization and computation of disturbance
  invariant sets for constrained switched linear systems with dwell time
  restriction,'' \emph{Automatica}, vol.~48, no.~9, pp. 2175--2181, 2012.

\bibitem{Wirth-2005}
F.~Wirth, ``A converse lyapunov theorem for linear parameter varying and linear
  switching systems,'' \emph{SIAM Journal on Control and Optimization},
  vol.~44, pp. 210--239, 2005.

\bibitem{our33}
M.~Dehghan and C.-J. Ong, ``Computations of mode-dependent dwell times for
  discrete-time switching system,'' \emph{Automatica}, vol.~49, no.~6, pp.
  1804--1808, 2013.

\bibitem{Hu-2003}
T.~Hu and Z.~Lin, ``Composite quadratic lyapunov functions for constrained
  control systems,'' \emph{IEEE Transactions on Automatic Control}, vol.~48,
  pp. 440--450, 2003.

\bibitem{path-following}
A.~Hassibi, J.~How, and S.~Boyd, ``A path-following method for solving bmi
  problems in control,'' in \emph{ACC}, vol.~2, 2009, pp. 1385 -- 1389.

\end{thebibliography}

\end{document}